\def\pa{\partial}
\newtheorem{theorem}{Theorem}
\title{Quadratic equations and monodromy evolving
deformations}
\author{Yousuke Ohyama}
\address{Graduate School of Information Science and Technology, 
         Osaka University, Toyonaka, Osaka 560-0043, Japan}
\email{ohyama@math.sci.osaka-u.ac.jp}
\subjclass{Primary 34M55; Secondary 17A99}
\begin{document}
\maketitle

\section{Introduction} 
In this paper we study a special class of monodromy evolving deformations (MED),
which represents Halphen's quadratic system.  

In 1996, Chakravarty and  Ablowitz \cite{CA} showed that 
 a fifth-order equation (DH-V) 
 \begin{eqnarray}
 \omega_1^{\prime} &=&\omega_2\omega_3-\omega_1(\omega_2 + \omega_3)+ \phi^2, \nonumber\\
 \omega_2^{\prime} &=&\omega_3\omega_1-\omega_2(\omega_3 + \omega_1)+ \theta^2,\nonumber \\
 \omega_2^{\prime} &=&\omega_3\omega_1-\omega_2(\omega_3 + \omega_1)- \phi\theta, \label{fifth} \\
\phi^{\prime} &=& \ \ \omega_1(\theta- \phi)-\omega_3(\theta + \phi), \nonumber \\
\theta^{\prime} &=& - \omega_2(\theta- \phi)-\omega_3(\theta + \phi), \nonumber 
\end{eqnarray}
which arises in complex Bianchi IX cosmological models
can be represented by MED. The DH-V is solved by the Schwarzian function
$S(z; 0,0,a)$ (three angles of the Schwarzian triangle are $0, 0$ and $a\pi$)
and a special case of Halphen's quadratic system. 
Since generic Schwarzian functions have natural boundary or moving branch points, 
\eqref{fifth} cannot be obtained as monodromy preserving deformations, because 
 monodromy preserving deformations has the Painlev\'e property.

The system \eqref{fifth} can be represented as the compatibility condition for 
 \begin{eqnarray}
 \frac {\pa Y}{\pa x} &=&  \frac {\mu I -(C_+ x^2 +2D x + C_-)} P  Y,  \label{5med1} \\
 2\frac {\pa Y}{\pa t} &=& [ \nu -(C_+x + D  )] Y -Q(x) \frac {\pa Y}{\pa x}. \label{5med2}
 \end{eqnarray}
Here 
\begin{align*}
P = \alpha_+   x^4 +(\beta_+ + \beta_-)x^2 + \alpha_-, \quad 
Q = \alpha_+ x^3 + \beta_+ x, \\
C_{\pm} =(i\omega_1 \pm \phi)\sigma_1  \pm (\omega_2 \pm i \theta)\sigma_2, \quad  
D =- \omega_3\sigma_3, 
\end{align*}
for $\alpha_{\pm} =(\omega_1-\omega_2)\mp( \theta+ \phi),
\beta_{\pm} =(\omega_1 + \omega_2-2\omega_3) \pm i(\theta- \phi)$. 
$\mu$ is a constant parameter and 
\begin{equation}\label{nu}
\frac{\pa\nu}{\pa x} = \frac{(\beta_-+ 4\omega_3) - \alpha_+ x^2}P \mu. 
\end{equation}
Here the standard Pauli spin matrices $\sigma_j$'s are 
$$
\sigma_1= \left(\begin{array}{cc}
0 & 1 \\
1 & 0
\end{array}
\right), 
\quad 
\sigma_2= \left(\begin{array}{cc}
0 & -i \\
i & 0
\end{array}
\right), 
\quad 
\sigma_3= \left(\begin{array}{cc}
1 & 0 \\
0 & -1
\end{array}
\right). 
$$
Since $\nu$ is not a rational function on $x$,  \eqref{5med2}  does 
not give a monodromy preserving deformation of \eqref{5med1}. 

The aim of this paper gives a basic theory of monodromy evolving deformations
(section ref{sec:mev}). And we describe Halphen's equation related to 
general Schwarzian function $S(z; a,b,c)$ as monodromy evolving deformations.

Halphen studied two types of quadratic equations. His first equation in \cite{H1}
\begin{eqnarray}
 X^{\prime}+Y^{\prime}&=& 2X Y, \nonumber\\
 Y^{\prime}+Z^{\prime}&=& 2Y Z, \label{hal1}\\
 Z^{\prime}+X^{\prime}&=& 2Z X, \nonumber
\end{eqnarray}
is very famous and is appeared in many mathematical fields. 
It is a reduction from the Bianchi IX cosmological models 
or the self-dual Yang-Mills equation \cite{CAC} \cite{CAT} and gives a special
self-dual Einstein metric  \cite{AH}. 
If we set $ y = 2(X + Y + Z)$, $y$ satisfies Chazy's equation 
\begin{equation}\label{Chazy}
y^{\prime\prime\prime}=2y y^{\prime\prime}-3(y^\prime)^2.
\end{equation}
Chazy's equation appeared in his classification of the third order Painlev\'e type 
equation \cite{Chazy}, but \eqref{Chazy} does not have the Painlev\'e property 
because generic solutions has natural boundary. 

Halphen's second equation \cite{H2}
\begin{eqnarray}
 x_1^{\prime}&=& x_1^2 +a (x_1-x_2)^2 +b(x_2-x_3)^2+c(x_3-x_1)^2,\nonumber\\
 x_2^{\prime}&=& x_2^2 +a (x_1-x_2)^2 +b(x_2-x_3)^2+c(x_3-x_1)^2,  \label{hal2}\\
 x_3^{\prime}&=& x_3^2 +a (x_1-x_2)^2 +b(x_2-x_3)^2+c(x_3-x_1)^2, \nonumber
\end{eqnarray}
is less familiar but it is a more general system than \eqref{hal1}. 
Here we use a different form the original Halphen's equation (See \cite{O2}).
In case $a = b = c = -\frac18$, 
\eqref{hal2} is equivalent to \eqref{hal1} by the transform 
$2X = x_2 + x_3, 2Y = x_3 + x_1, 2Z = x_1 + x_2$. 

Halphen's first equation \eqref{hal1} can be solved by theta constants \cite{H1} \cite{O1}.
His second equation \eqref{hal2} can be solved by the Gauss hypergeometric function \cite{O2}.
When $a = b = c = -\frac18$, \eqref{hal2} is solved by $F(1/2, 1/2,1; z)$ which 
is related to $\theta_3(0,\tau)$. 

Halphen's second equation is also a reduction of the self-dual 
Yang-Mills equation  or the Einstein self-dual equation \cite{ACH}. 
Since Halphen's equation or Chazy's equation does not have the Painlev\'e property, 
they are not described as monodromy preserving deformations. 

\par\bigskip

The aim of this paper is to represent the Halphen's second system as monodromy 
evolving deformations (section  \ref{sec:hal2}). Since Halphen's equation do not have the Painlev\'e property, 
it is never represented by monodromy preserving deformations. 

In \cite{CAC} \cite{CAT}, they obtained the Lax pair of Halphen's first equation and Chazy's 
equation, which are special cases of Halphen's second equation. And our Lax pair 
is different from their results even when Halphen's first equation since ours are 
monodromy evolving deformations but \cite{CAC} \cite{CAT} gave the Lax pair as a reduction of 
the self-dual Yang-Mills equation. 

Both \cite{CA} and the author treat special cases of monodromy evolving deformations. 
In section \ref{sec:mev}, we give a general frame of MED only when 
the scalar part of local exponent matrices remains constant.
We do not have general theory of monodromy evolving deformation. But 
it is sufficient to treat such a special case to study Chazy's equation and 
Halphen's equations, which does not have the Painlev\'e property.

This work was paritally done during my stay at the Mathematical Institute, 
University of Oxford. The author would like to express thanks to Prof. Lionel 
Mason and Prof. Nick Woodhouse.

\section{Reduction of the self-dual Yang-Mills equation }
In this section, we survey recent works by the Chakravarty-Ablowitz group.

We take a $\frak{g}$-valued 1-form 
$$A=\sum_{j=1}^4 A_j (x)dx_j,$$
where $\frak{g}$ is a Lie algebra and $x =(x_1, x_2, x_3, x_4) \in \mathbb{R}^4$. 
The curvature 2-form $F = \sum_{j<k} F_{jk}dx_j \wedge dx_k$ is 
given by 
$$F_{jk} = \partial_jA_k - \partial_kA_j  - [A_j ,A_k],$$
where $\partial_j  = \partial/\partial x_j$. The SDYM equation is 
\begin{equation}
F_{12} = F_{34}, \quad F_{13} = F_{42}, \quad F_{14} = F_{23}.  \label{sdym}
\end{equation}

If the $A_j$'s are independent of $x_2, x_3, x_4$ and we  take a special gauge 
such that $A_1$=0, then \eqref{sdym} is reduced the Nahm equations 
\begin{eqnarray}
\partial_tA_2&=& [A_3, A_4], \nonumber\\
\partial_tA_3&=& [A_4, A_2],  \label{nahm}\\
\partial_tA_4&=& [A_2, A_3]. \nonumber
\end{eqnarray}
Here we set $t=x_1$. 

We take $\frak{diff} (S^3)$, the infinite-dimensional Lie algebra of vector fields on $S^3$ as 
the Lie algebra g above. Let $X_1, X_2$ and $X_3$ are divergence-free vector fields on $S^3$ and satisfy commutation relations 
$$[X_j, X_k]=\sum_l \varepsilon_{jkl}X_l, $$
where $\varepsilon_{jkl}$ is the standard anti-symmetric form with 
$\varepsilon_{123} = 1$. Let $O_{jk}\in  SO(3)$ be a matrix such that 
\begin{align*}
\sum_{j,k,l} \varepsilon_{jkl} O_{jp}O_{kq}O_{lr} =\varepsilon_{pqr}, \\
X_j(O_{lk})= \sum_{p} \varepsilon_{jkp}O_{lp}. 
\end{align*}
Then we choose the connection of the form 
$$A_l=\sum_{j,k=1}^3 O_{lj}M_{jk}(t)X_k.$$

Then the 3$\times$3 matrix valued function $M = M(t)$ satisfies the ninth-order Darboux-
Halphen (DH-IX) system \cite{CAT}
\begin{equation}
\frac{dM}{dt} = (\textrm{adj}\, M)T + M^T M - (\textrm{Tr} M)M. \label{dh9}
\end{equation}
Here we set $\textrm{adj}\, M:= \det M\cdot M^{-1}$, and $M^T$ is the transpose of $M$. 
The DH-IX system \eqref{dh9} was also derived by Hitchin \cite{Hitchin} where
it  represents an $SU(2)$-invariant hypercomplex four-manifold. Since the Weyl curvature
of a hypercomplex four-manifold is self-dual, \eqref{dh9} gives a class of self-dual Weyl
Bianchi IX space-times.

\begin{theorem} (1)  
When $M_{jk}(t)= \omega_k(t)\delta_{jk}$, the DH-IX system is equaivalent to 
the Halphen's first equation  \cite{CAC} \cite{CAT}. 

(2)  
When 
$$M=\begin{pmatrix} 
\omega_1 & \theta & 0 \\ 
\phi & \omega_2 &  0 \\ 
0 & 0 & \omega_3 \\ 
\end{pmatrix},
$$
the DH-IX system is the DH-V \eqref{fifth}, which is described by monodromy evolving 
deformations as (\ref{5med1}-\ref{5med2}) \cite{CA}.

(3)  In generic case, the DH-IX system is 
equivalent to Halphen's second equation \cite{ACH}.
\end{theorem}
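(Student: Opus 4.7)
The plan is to handle (1) and (2) by direct substitution into the DH-IX system
$$M' = (\textrm{adj}\, M)^{T} + M^{T}M - (\textrm{Tr}\, M)\,M,$$
and to handle (3) by an $SO(3)$ gauge reduction in the spirit of \cite{ACH}. For (1) and (2) the key preliminary observation is that the prescribed block pattern of $M$ is preserved by each of the three terms on the right-hand side, so that the ansatz is consistent with the flow and the reduced equations can be read off entrywise.

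For (1), with $M = \textrm{diag}(\omega_1,\omega_2,\omega_3)$, the adjugate, $M^{T}M$, and $(\textrm{Tr}\, M)M$ are all diagonal, and the $(i,i)$ entries give $\omega_i' = \omega_j\omega_k - \omega_i(\omega_j+\omega_k)$. Adding pairs yields $\omega_i'+\omega_j' = -2\omega_i\omega_j$, so the sign change $X_i = -\omega_i$ delivers \eqref{hal1}. For (2), the adjugate and $M^{T}M$ preserve the $(2{+}1)$ block-triangular pattern (one checks directly that the four entries $(1,3),(2,3),(3,1),(3,2)$ vanish on the right-hand side), and a short computation shows: the $(1,1),(2,2),(3,3)$ entries give the three $\omega_i'$ equations of DH-V, with the $\phi^{2}$, $\theta^{2}$, and $-\phi\theta$ contributions arising from $M^{T}M$; the $(1,2)$ and $(2,1)$ entries, after grouping the remaining terms into combinations $\omega_j(\theta\pm\phi)$, give the $\theta'$ and $\phi'$ equations.

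For (3) the main obstacle is that generically no block structure is preserved, so a different strategy is required. Following \cite{ACH}, I would exploit the $SO(3)$ equivariance $M \mapsto g^{T}Mg$ of DH-IX and introduce a $t$-dependent frame $g(t)\in SO(3)$ that diagonalizes the symmetric part $\tfrac12(M+M^{T})$. Writing the transformed equation, which now carries a connection term $[\Omega,M]$ with $\Omega = g^{T}\dot g$, the diagonal entries $x_1,x_2,x_3$ evolve according to a system in which the antisymmetric part of $M$ and $\Omega$ appear only through $SO(3)$-invariant combinations of the off-diagonal data. The delicate point is to show that these invariants can be eliminated purely in terms of the differences $(x_i-x_j)^{2}$, producing \eqref{hal2} with the three moduli $a,b,c$ encoding the eliminated invariants. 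Verifying that the reduced equations close on the $x_i$ alone, rather than requiring further auxiliary variables, is where the bulk of the work lies; once this closure is established, the coefficients $a,b,c$ can be identified by matching the coefficient of each $(x_i-x_j)^{2}$ contribution on both sides.
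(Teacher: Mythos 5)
Your proposal is correct and, where the paper gives an argument at all, follows the same route. For (1) and (2) the paper offers no computation, only the citations \cite{CAC}, \cite{CAT}, \cite{CA}; your direct substitution is the right reading of \eqref{dh9} (with the paper's ``$(\mathrm{adj}\,M)T$'' understood as $(\mathrm{adj}\,M)^{T}$ --- note that without the transpose the $(1,2)$ and $(2,1)$ entries would \emph{not} reproduce the $\theta'$ and $\phi'$ equations of \eqref{fifth}, so your interpretation is forced), and the block-pattern preservation and entrywise identifications you describe do check out. For (3) the paper's explanation is the same reduction you sketch: write $M=M_s+M_a$, diagonalize $M_s=PdP^{-1}$ with $d=\mathrm{diag}(\omega_1,\omega_2,\omega_3)$, conjugate $M_a=PaP^{-1}$ to a skew matrix with entries $\tau_1,\tau_2,\tau_3$, and then quote \cite{ACH} for the closure of the flow on $(\omega_1,\omega_2,\omega_3)$ in the form \eqref{ACH}, with $\tau^2$ a fixed quadratic form in the differences; neither you nor the paper actually carries out the elimination, so on this point the two arguments are equally incomplete. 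Two small caveats: the diagonalizing matrix must be taken \emph{complex} orthogonal (as the paper states explicitly), not in the real group $SO(3)$ as you write, since the $\omega_j$ are complex; and the clause of (2) asserting that DH-V is represented by the monodromy evolving deformation (\ref{5med1}--\ref{5med2}) is not addressed by your argument at all --- it requires verifying the compatibility condition of that Lax pair, which the paper likewise leaves to \cite{CA}.
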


We explain (3). 
We decompose the matrix $M = M_s+M_a$, where $M_s$  is the symmetric part of $M$ 
and $M_a$  is the anti-symmetric part of $M$.  
We assume that the eigenvalues of the symmetric part $M_s$ of $M$ are distinct.
Then  $M_s$ can be diagonalized using a  complex  orthogonal matrix $P$ and 
we can  write
$$M_s= P d P^{-1}, \quad M_a= P a P^{-1},$$ 
where $d=\textrm{diag}\, (\omega_1, \omega_2, \omega_3).$  
The matrix element of a skew-symmetric matrix
$a$ are denoted as $a_{12}=-a_{21}=\tau_3, a_{23}=-a_{32}=\tau_1, a_{31}=-a_{13}=\tau_2$.  
In  show that equation \eqref{dh9} can be 
reduced to the third-order system 
\begin{eqnarray}
\omega_1^{\prime} &=&  \omega_2\omega_3 - \omega_1(\omega_2 + \omega_3)+ \tau^2, \nonumber\\
\omega_2^{\prime} &=&  \omega_3\omega_1 - \omega_2(\omega_3 + \omega_1)+ \tau^2,\label{ACH}\\ 
\omega_2^{\prime} &=&  \omega_3\omega_1 - \omega_2(\omega_3 + \omega_1)+ \tau^2.\nonumber
\end{eqnarray}
where 
$$
\tau^2 = \alpha^2_1(\omega_1-\omega_2)(\omega_3-\omega_1)+
 \alpha^2_2(\omega_2-\omega_3)(\omega_1-\omega_2)+ 
\alpha^2_3(\omega_3-\omega_1)(\omega_2-\omega_3).
$$
The system \eqref{ACH} is equivalent to Halphen's second equation \eqref{hal2} by 
$$
2\omega_1 = -x_2 - x_3, \quad
2\omega_2 = -x_3 - x_1, \quad 
2\omega_3 = -x_1 - x_2, 
$$
and 
$$
8a = \alpha^2_1 + \alpha^2_2- \alpha^2_3 - 1, \quad
8b = -\alpha^2_1 + \alpha^2_2 + \alpha^2_3 - 1, \quad
8c = \alpha^2_1 - \alpha^2_2 + \alpha^2_3 -1. $$
The system \eqref{ACH} can be solved by the Schwarzian function $S(x; \alpha_1, 
\alpha_2, \alpha_3)$.

\section{Halphen's  equation}
In this section we review Halphen's equation.  Halphen's first equation \eqref{hal1} 
can be solved by theta constants. See \cite{O1}.
\begin{eqnarray*}
X=  2\frac \partial {\partial t}\log \left[ \theta_2\left(0,\frac {a t+b}{c t+d}\right)
(c t+d)^{-1/2}\right]\\
Y=  2\frac \partial {\partial t}\log \left[ \theta_3\left(0,\frac {a t+b}{c t+d}\right)
(c t+d)^{-1/2}\right],\\
Z=  2\frac \partial {\partial t}\log \left[ \theta_4\left(0,\frac {a t+b}{c t+d}\right)
(c t+d)^{-1/2}\right].
\end{eqnarray*}
Here $\theta_j(z,\tau)$ is Jacobi's theta function
and $ad-bc=1$.  When $j=2,3,4$, $\theta_j(z,\tau)$ 
is an even function as $z$.  Since generic solutions of \eqref{hal1} have 
natural boundary, \eqref{hal1} does not have the Painlev\'e property. 

If we set $y=2(X+Y+Z)$, $y$ satisfies Chazy's equation \eqref{Chazy}, 
which is solved by
 $$y(t)=4\frac \partial {\partial t}\log \left[\vartheta_1^{\prime}
\left(0,\frac {at+b}{ct+d}\right)(ct+d)^{-3/2}\right],$$
where $ad-bc=1$. 

Halphen's first equation and Chazy's equation are special cases of Halphen's second 
equation \eqref{hal2}, which is solved by hypergeometric functions. 
For details, see \cite{O2}.
We take a Fuchsian equation
$$\frac {d^2 y}{dz^2}=\left( \frac{a+b}{z^2}+ \frac{c+b} {(z-1)^2}- \frac{2b}{z(z-1)}
\right)y.$$
Let $t$ be a ratio of two solutions of the above equation. We set 
$$x_1=\frac{d}{d t}\log y, \quad x_2=\frac{d}{d t}\log \frac{y}{z},
\quad x_3=\frac{d}{d t}\log \frac{y}{z-1}.$$
Then $x_1, x_2$ and $x_3$ satisfy \eqref{hal2}.

Chazy's equation \eqref{Chazy} can be solved by the hypergeometric equation
$$x(1-x)  \frac{d^2y}{   dx^2} + \left(\frac12 -\frac76x\right) \frac{dy}{ dx}
 -   \frac 1{144}  y=0,$$ 
and   is a special case of Halphen's second equation \eqref{hal2} when 
 $$a=-\frac{31}{288}, \qquad b=-\frac{23}{288}, \qquad c=-\frac{41}{288}.$$

\section{Monodromy evolving deformations}\label{sec:mev}
We give a basic theory of monodromy evolving deformation, which is a generalization 
of monodromy preserving deformation by Schlesinger \cite{Sch05}.
We study the special case of evolving. We assume that 
the scalar part of local exponent matrices will change. 

The solution of a linear equation
\[
\frac {dY}{d x}  =  \sum^{n}_{k=1}  \frac {A_k} {x-x_j} Y,  \\
\]
can be developed as 
\begin{align*}
 Y(x)\sim & Y_j(x) (x-x_j)^{L_j}, \qquad  Y_j(x)=O((x-x_j)^0), \\ 
 Y(x)\sim & Y_\infty(x) x^{-L_\infty}, \qquad  Y_\infty(x)=I+ O(1/x),
\end{align*}
around the singular points $x=x_1, x_2, ..., x_n$ and $\infty$. 
Here    $I$ is the unit matrix.

When  the monodromy is preserved, $Y$ satisfies the deformation equation
\[
\frac{\pa Y}{\pa x_j}  =-\frac{A_j}{x-x_j}Y.
\]
And the compatibility condition gives the Schlesinger equation
\[
\frac{\pa A_k}{\pa x_j}  = \frac{ [A_k, A_j] }{x_k-x_j}, \qquad j\not=k.
\]

We will study monodromy evolving deformations when $L_j$ will evolve according to
\begin{equation}\label{evolv}
\frac {\pa L_j}{\pa x_k} =  f_{jk} I
\end{equation} 
for any $j,k =1, 2, ..., n$. Here $f_{jk}=f_{jk}(t)$ is a scalar function. 
The local exponent $L_\infty$ evolve as
$$ \frac {\pa L_\infty}{\pa x_k} =  -\sum_{j=1}^n  f_{jk} I,$$
because the sum of eigenvalues of all local exponents is invariant. 

\begin{theorem}
If the local exponents $L_j$ evolve as \eqref{evolv}, $Y$ satisfies the deformation
equation
\[ 
\frac{\partial Y }{\partial x_k}  
=\left(-\frac{A_k}{x-x_k} + \sum^{n}_{j=1}f_{jk} \log(x-x_j)  \right)Y.
\]
\end{theorem}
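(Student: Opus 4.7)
The plan is to form the matrix $Z_k(x) := \frac{\partial Y}{\partial x_k}\, Y^{-1}$ and identify it with
\[
R_k(x) := -\frac{A_k}{x-x_k} + \sum_{j=1}^{n} f_{jk}\log(x-x_j)
\]
by showing that $Z_k - R_k$ is single-valued, holomorphic at every finite point, and $O(1/x)$ at infinity; Liouville's theorem will then force $Z_k \equiv R_k$, which is the desired deformation equation. The crucial structural fact I would exploit throughout is that the scalar hypothesis $\partial L_j/\partial x_k = f_{jk} I$ makes $\partial_k L_j$ commute with $L_j$, so that
\[
\frac{\partial}{\partial x_k}(x-x_j)^{L_j} = f_{jk}\log(x-x_j)\,(x-x_j)^{L_j}
\]
with no extra commutator terms. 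This is precisely what allows the simple logarithmic ansatz to succeed.

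Next I would analyze the local structure of $Z_k$ at each singular point via the factorization $Y = Y_j(x)(x-x_j)^{L_j}$ with $Y_j$ holomorphic and invertible at $x_j$. For $j \ne k$ a direct differentiation yields $Z_k = (\partial_k Y_j)Y_j^{-1} + f_{jk}\log(x-x_j)$, matching the logarithmic branch of $R_k$ at $x_j$ with no pole. For $j = k$ the extra term $-L_k(x-x_k)^{L_k-1}$ coming from $\partial_k(x-x_k)^{L_k}$ produces a simple pole whose residue is $-Y_k(x_k)L_k Y_k(x_k)^{-1} = -A_k$, again matching $R_k$. To see that the branches agree globally, a small loop around $x_j$ transforms $Y$ by $\exp(2\pi i L_j)$, and the commuting-scalar property gives $\partial_k\exp(2\pi i L_j)\cdot\exp(-2\pi i L_j) = 2\pi i f_{jk} I$, so $Z_k$ picks up exactly the same additive jump $2\pi i f_{jk} I$ as $f_{jk}\log(x-x_j)$. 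Finally, near $\infty$ the representation $Y = Y_\infty(x)x^{-L_\infty}$ with $Y_\infty = I + O(1/x)$ and $\partial_k L_\infty = -\sum_j f_{jk} I$ yields $Z_k = \sum_j f_{jk}\log x + O(1/x)$, which agrees with the expansion of $R_k$ there since $\log(x-x_j) = \log x + O(1/x)$.

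Combining these, $Z_k - R_k$ is single-valued on $\bp^1$, holomorphic everywhere finite, and vanishes at $\infty$, so by Liouville it is identically zero. The main obstacle I anticipate is the careful bookkeeping needed to separate logarithmic-branch pieces from meromorphic pieces at every singular point, and the recognition that the entire argument closes only because $\partial_k L_j$ is a scalar matrix. A non-scalar evolution would introduce $[L_j,\partial_k L_j]$ terms that could not be absorbed into the logarithmic ansatz, which explains why the paper restricts attention to this special case.
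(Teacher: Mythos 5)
Your proposal is correct and follows essentially the same route as the paper: expand $\frac{\partial Y}{\partial x_k}Y^{-1}$ locally at each $x_j$ and at $\infty$ using $Y\sim Y_j(x)(x-x_j)^{L_j}$, use the scalar hypothesis to pull out the $f_{jk}\log(x-x_j)$ terms and the identity $A_k=Y_k(x_k)L_kY_k(x_k)^{-1}$ to get the pole, and conclude by a Liouville argument. Your explicit verification that the monodromy jumps of $Z_k$ and $R_k$ cancel (so the difference is single-valued) is a step the paper leaves implicit, and you correctly obtain $+\sum_j f_{jk}\log x$ at infinity where the paper's displayed intermediate expansion has a sign slip, but these are refinements of the same argument rather than a different proof.
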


\begin{proof} The proof is essentially the same as the case of monodromy 
preserving deformations. \cite{Sch05}.

Since 
\begin{eqnarray*}
\frac{d }{dx}Y(x) Y(x)^{-1}&\sim&\frac{d }{dx}Y_j(x) Y_j(x)^{-1}+\frac{L_j}{x-x_j}Y_j(x)Y_j(x)^{-1}\\
&\sim&  Y_j(a_j){L_j}Y_j(x_j)^{-1}\frac{1}{x-a_j}+ O((x-x_j)^0) 
\end{eqnarray*}
for $j=1,2,...,n$, we have 
\begin{equation}\label{gauge}
A_j=Y_j(x_j){L_j}Y_j(x_j)^{-1}.
\end{equation}

Therefore near $x=\infty$, we have the following expansions:
\begin{eqnarray*} 
\frac{\partial Y }{\partial x_k} Y^{-1}
&\sim &\frac{\partial Y_\infty}{\partial x_k} (x) Y_\infty(x)^{-1} -\sum^{n}_{j=1}   f_{jk} \log x,  \\
&\sim & -\sum^{n}_{j=1}   f_{jk} \log x + O(\frac1x) ,
\end{eqnarray*}
since $ \frac{\partial Y_\infty}{\partial x_k} \sim  O(\frac1x)  $ by $Y_\infty(0)=I$. 

In case $k\not= j$, the expansion near  $x=x_j$ is
\begin{eqnarray*}
\frac{\partial Y }{\partial x_k} Y^{-1}
&\sim &\frac{\partial Y_j}{\partial x_k} (x) Y_j(x)^{-1} +   f_{jk} \log(x-x_j), 
\quad (j\not= k). 
\end{eqnarray*}
The expansion near  $x=x_k$ is
\begin{eqnarray*}
\frac{\partial Y }{\partial x_k} Y^{-1}
&\sim &\frac{\partial Y_k}{\partial x_k}(x) Y_k(x)^{-1} 
   -  Y_k(x) \frac{L_k}{x-x_k} Y_k(x)^{-1} +  f_{kk} \log(x-x_k)\\
&\sim &-   \frac{A_k}{x-x_k}   +  f_{kk} \log(x-x_k) +(O(x-x_k)^0).
\end{eqnarray*}
We use \eqref{gauge} to show the last line. Therefore we obtain that
\[
\frac{\partial Y }{\partial x_k} =\left(-\frac{A_k}{x-x_k} 
    + \sum_{j=1}^n  f_{jk} \log(x-x_j)  \right)Y, 
\]
which gives the deformation of $x_k$. \end{proof}

Since the deformation equation contains a logarithmic term 
\[
\nu_k =\sum_{j=1}^n  f_{jk} \log(x-x_j),
\]  the monodromy data is not preserved.  $\nu_k$ satisfies 
\[
\frac {d \nu_k}{dx}=\sum_{j=1}^n \frac{f_{jk}}{x-x_j},
\]
which is essentially equivalent to \eqref{nu} in the work of Chakravarty and Ablowitz. 
It seems difficult to study monodromy evolving deformations when $f_{jk}$'s is 
not scalar functions. 

\section{Halphen's second equation and MED}\label{sec:hal2}

We show that Halphen's second equation is represented by monodromy evolving deformations. 
Let $x_1, x_2, x_3$ be functions of $t$. In case $a+b=c+b=-1/4$, our evolving 
deformation is essentially equivalent to \cite{CA} for DH-V. But ours are simpler 
than their deformation.

We set 
$$Q(x)=x^2 +a (x_1-x_2)^2 +b(x_2-x_3)^2+c(x_3-x_1)^2.$$
Halphen's second equation is 
$$ x_j^{\prime}=Q(x_j), \qquad j=1,2,3.$$

We set 
$$P(x)=(x-x_1)(x-x_2)(x-x_3)$$
and consider the following 2 $\times$ 2 linear system. 
 \begin{eqnarray} 
 \frac {\pa Y}{\pa x} =\left( \frac \mu P + \sum_{j=1}^3 \frac{c_j S}{x-x_j}\right) Y, \label{med1}\\
 \frac {\pa Y}{\pa t} =\left( \nu + \sum_{j=1}^3 c_j x_j S  \right) Y -Q(x) \frac {\pa Y}{\pa x}. \label{med2}
 \end{eqnarray}
Here $\mu$   and  $c_j$'s are constants with $c_1+c_2+c_3=0$, and 
 $S$ is any traceless constant  matrix. 
We assume
$$\frac {\pa\nu}{\pa x} =-\frac {x+x_1+x_2+x_3} P \mu.$$

\begin{theorem}
The compatibility condition of \eqref{med1} and \eqref{med2} gives the Halphen's second equation.
\end{theorem}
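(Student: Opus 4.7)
The plan is to compute the zero-curvature equation for the pair $Y_x = A Y$, $Y_t = \widetilde B Y$ with $A = (\mu/P) + S\sum_j c_j/(x-x_j)$ and $\widetilde B = B - Q A$, $B = \nu + (\sum_j c_j x_j) S$. This takes the form $A_t = \widetilde B_x + [\widetilde B, A]$, and the first useful observation is that $[\widetilde B, A] = [B, A] - Q[A, A] = [B, A]$. Since both $A$ and $B$ lie in the commutative subalgebra spanned by $I$ and the single traceless matrix $S$, this bracket vanishes identically. Consequently the compatibility condition collapses to the matrix identity $A_t = B_x - Q_x A - Q A_x$, which splits cleanly into an $I$-component and an $S$-component.

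I would analyze the $I$-component first. After clearing denominators and substituting the hypothesis $\nu_x = -\mu(x+x_1+x_2+x_3)/P$, the scalar part reduces to the polynomial identity $P_t + Q P_x = (3x + x_1+x_2+x_3)\,P$. Expanding $P_t = -\sum_j x_j'\,P/(x-x_j)$ and $P_x = \sum_j P/(x-x_j)$ and exploiting the fact that $Q(x) - Q(x_j) = x^2 - x_j^2$ (because $Q - x^2$ is independent of $x$), the identity is seen to be equivalent to the vanishing of the partial fraction $\sum_j (Q(x_j) - x_j')/(x-x_j)$. Matching residues at the three distinct points $x_j$ then forces $x_j' = Q(x_j)$ for $j=1,2,3$, which is exactly Halphen's second equation.

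For the $S$-component, once Halphen's equation is in hand, the identity $x_j' - Q(x) = -(x-x_j)(x+x_j)$ lets the second-order poles of $\sum_j c_j(x_j' - Q)/(x-x_j)^2$ telescope to $-\sum_j c_j(x+x_j)/(x-x_j)$. Comparing with $-2x\sum_j c_j/(x-x_j)$ leaves exactly $\sum_j c_j(x - x_j)/(x-x_j) = \sum_j c_j$, which vanishes by the assumption $c_1+c_2+c_3=0$. Hence the $S$-component is automatic, so the full compatibility condition is equivalent to Halphen's second system.

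I do not expect a serious obstacle. The computation is driven by the abelian nature of $\mathrm{span}\{I, S\}$, which kills the only potentially nontrivial term $[B,A]$. The roles of the two external hypotheses become transparent along the way: the specific form of $\nu_x$ is chosen precisely so that $\nu_x$ matches the rational part of $P_t + QP_x$ modulo $P$, while the constraint $\sum_j c_j = 0$ is exactly what is needed to clear the residual term in the $S$-component. The main bookkeeping care is to keep the $I$- and $S$-components separate and to track which partial-fraction identity forces Halphen's equation (residues of the simple-pole piece in the $I$-component) versus which is a consequence of it.
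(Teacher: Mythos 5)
Your computation is correct and is precisely the direct verification that the paper alludes to but does not write out: the paper's entire ``proof'' is the sentence ``We can prove the theorem above directly.'' Your reduction --- commutativity of $\mathrm{span}\{I,S\}$ killing the bracket, the $I$-component collapsing to $\sum_j \bigl(Q(x_j)-x_j'\bigr)/(x-x_j)=0$ and hence to Halphen's system, and the $S$-component vanishing via $c_1+c_2+c_3=0$ --- all checks out and supplies exactly the omitted details.
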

We can prove the theorem above directly. 
Therefore  \eqref{med1} and \eqref{med2}  are a Lax pair of Halphen's second equation. 

The local monodromy of $Y_j(x)$ around $x=x_j$ is 
adjoint to $e^{2\pi i L_j}$.  
This deformation does not preserve monodromy data. The local exponent   $L_j$ at $x=x_j$ 
evolves as
\begin{equation*}\label{exp_evol}
\frac{d L_j}{dt}= \frac{2x_j+x_k+x_l }{\prod_{m\not=j} (x_j-x_m)}\mu,
\end{equation*}
where $\{j,k,l\}=\{1,2,3\}$ as a set. 
The singular points $x_j$  also deform as
\begin{equation*}\label{sing_evol}
\frac{d x_j}{dt}= Q(x_j),
\end{equation*}
which is nothing but Halphen's second equation.  

We can eliminate the variables $\mu$ and $\nu$  in \eqref{med1} and \eqref{med2} 
by the rescaling $Y=fZ$  for a scalar function $f=f(x,t)$. 
$f $  satisfies the linear equations 
 \begin{align*}
  \frac {\pa f}{\pa x} &=  \frac \mu P f,\\
 \frac {\pa f}{\pa t} &=  \nu f -Q(x) \frac {\pa f}{\pa x}.
  \end{align*}
The integrability condition for $f$ is 
$$ \frac {\pa P}{\pa t} +Q  \frac {\pa P}{\pa x} -P  \frac {\pa Q}{\pa x} -(x+x_1+x_2+x_3)P=0.$$
And $Z$ satisfies 
\begin{align*}
\frac {\pa Z}{\pa x} &=  \sum_{j=1}^3 \frac{c_j S}{x-x_j}  Z,  \\
\frac {\pa Z}{\pa t} &=\sum_{j=1}^3 c_j x_j S  Z -Q(x) \frac {\pa Z}{\pa x}.  
\end{align*}
The integrability condition for $Z$ gives the sixth Painlev\'e equation. 
In our case, we take the Riccati solution of the sixth Painlev\'e equation,
which reduce to the hypergeometric equation, since the residue matrix of
$(C x+D)/P$ at the infinity is zero. 
  But this hypergeometric equation 
is different from the hypergeometric functions which solve \eqref{hal2}. 

The Halphen's equation is described as MED, but it stands on a similar position  
as the Riccati solution of the  Painlev\'e equations. 
Studies of generic solutions of MED or other special solutions of MED, such as 
algebraic solutions or elliptic solutions are future problems.

\end{document}